\DeclareMathOperator*{\esssup}{ess\,sup}
\newtheorem{theorem}{Theorem}
\newtheorem{corollary}{Corollary}
\newtheorem{lemma}{Lemma}
\newtheorem{proposition}{Proposition}
\newtheorem{remark}{Remark}
\newcommand{\leref}{Lemma~\ref}
\newcommand{\coref}{Corollary~\ref}
\newcommand{\prref}{Proposition~\ref}
\newcommand{\thref}{Theorem~\ref}
\newcommand{\eps}{\varepsilon}
\newcommand{\T}{\mathcal{T}}
\newcommand{\E}{\mathbb{E}}
\title[]{On an Optimal Stopping Problem of An Insider }
\author[]{Erhan Bayraktar} \thanks{This research was supported in part by the National Science Foundation under grants DMS 0955463 and DMS 1118673.}  
\address{Department of Mathematics, University of Michigan}
\email{erhan@umich.edu}
\author[]{Zhou Zhou}
\address{Department of Mathematics, University of Michigan}
\email{zhouzhou@umich.edu}
\date{\today}
\keywords{optimal stopping problem of an insider,  L\'{e}vy's modulus of continuity for Brownian motion.}
\begin{document}
\maketitle

\begin{abstract}
We consider the optimal stopping problem $v^{(\eps)}:=\sup_{\tau\in\mathcal{T}_{0,T}}\mathbb{E}B_{(\tau-\eps)^+}$ posed by Shiryaev at the International Conference on Advanced Stochastic Optimization Problems organized by the Steklov Institute of Mathematics in September 2012. Here $T>0$ is a fixed time horizon, $(B_t)_{0\leq t\leq T}$ is the Brownian motion, $\eps\in[0,T]$ is a constant, and $\mathcal{T}_{\eps,T}$ is the set of stopping times taking values in $[\eps,T]$. The solution of this problem is characterized by a path dependent reflected backward stochastic differential equations, from which the continuity of $\eps \to v^{(\eps)}$ follows. For large enough $\eps$, we obtain an explicit expression for $v^{(\eps)}$ and for small $\eps$ we have lower and upper bounds.
The main result of the paper is the asymptotics of $v^{(\eps)}$ as $\eps\searrow 0$. As a byproduct, we also obtain L\'{e}vy's modulus of continuity result in the $L^1$ sense.
\end{abstract}

\section{introduction}
In this paper we consider Shiryaev's optimal stopping problem:
\begin{equation}\label{eq:value}
v^{(\eps)}=\sup_{\tau\in\mathcal{T}_{0,T}}\mathbb{E}B_{(\tau-\eps)^+},
\end{equation}
where $T>0$ is a fixed time horizon, $(B_t)_{0\leq t\leq T}$ is the Brownian motion, $\eps\in[0,T]$ is a constant, and $\mathcal{T}_{\eps,T}$ is the set of stopping times taking values in $[\eps,T]$. This can be thought of a problem of an insider in which she  is allowed to peek $\eps$ into the future for the payoff before making her stopping decision. 

We show that $v^{(\eps)}$ is the solution of a corresponding path dependent reflected backward stochastic differential equation (RBSDEs). This is essentially an existence result, and it shows that an optimal stopping time exists. But the main advantage of using an RBSDE representation is that we can easily get the continuity of $v^{(\eps)}$ with respect to $\eps$ from the stability of the RBSDEs. However, we want to compute the function as explicitly as possible, and the RBSDE representation of the problem does not  help. This is because the problem is path dependent (one of the state variables would have be an entire path of length $\eps$), and there is no numerical result available so far that can cover our case.

In fact, we will observe that $v^{(\eps)}=\sqrt{2 \left(T-\eps\right) \over \pi }$ if $\eps\in[T/2,T]$, while as far as we know there is no explicit solution for $v^{(\eps)}$ if $\eps\in(0,T/2)$.  But for smaller $\eps$, there are only lower and upper bounds available. As the main result of this paper, we provide the asymptotic behavior of $v^{(\eps)}$ as $\eps\searrow 0$ (see \thref{t3}). As a byproduct, we also get L\'{e}vy's modulus of continuity theorem in the $L^1$ sense as opposed to the almost-surely sense (compare \coref{c2} and, e.g.,\cite[Theorem 9.25, page 114]{KS}).

\section{First observations}
Let $T>0$ and let $\{B_t, t \in [0,T]\}$ be a Brownian motion defined on a probability space $(\Omega, \mathcal{F}, \mathbb{P})$ and let $\mathbb{F}=\{\mathcal{F}_t, t \in [0,T]\}$ be the natural filtration augmented by the $ \mathbb{P}$-null sets of $\mathcal{F}$.
We aim at the problem \eqref{eq:value}. But for the sake of generality, let us first look at the more general optimal stopping problem of an insider:
\begin{equation}\label{eq:general}w=\sup_{\tau\in\mathcal{T}_{\eps,T}}\mathbb{E}\left[\sum_{i=1}^n \phi_{(\tau-\eps^i)+}^i\right],\end{equation}
where $(\phi_t^i)_{0\leq t\leq T}$ is continuous and progressively measurable, $\eps^i\in[0,T]$, $i=1,\dotso,n$, are given constants, and $\mathcal{T}_{\eps,T}$ is the set of stopping times that lie between a constant $\eps\in[0,T]$ and $T$. Observe that $\tau-\eps^i$ is not a stopping time with respect to $\mathbb{F}$ for $\eps^i>0$. The solution to  \eqref{eq:general} is described by the following result:
\begin{proposition}\label{thm:Main}
Assume $\mathbb{E}\big[\sup_{0\leq t\leq T}(\xi_t^+)^2\big]<\infty$, where $\xi_t=\sum_{i=1}^n \phi_{(t-\eps^i)+}^i,\ 0\leq t\leq T$. Then the value defined in \eqref{eq:general} can be calculated using a reflected backward stochastic differential equation (RBSDE). More precisely, $w=\mathbb{E}Y_\eps$, for any $\eps \in [0,T]$,
where $(Y_t)_{0\leq t\leq T}$ satisfies the RBSDE
\begin{equation}\label{eq:RBSDE}
\begin{split}
 &\xi_t \leq Y_t=\xi_T-\int_t^TZ_sdW_s+(K_T-K_t), \; 0\leq t\leq T,\\
  & \int_0^T(Y_t- \xi_t)dK_t=0, 
\end{split}
\end{equation}
Moreover, there exists an optimal stopping time $\hat\tau^{}$ described by
$$\hat\tau=\inf\{t \in [\eps,T]: Y_t=\xi_t\}.$$ 
\end{proposition}
\begin{remark}
One should note that the optimal stopping problem we are considering is path dependent (i.e. not of Markovian type) and therefore one would not be able to write down a classical free boundary problem corresponding to \eqref{eq:value}.
\end{remark}
We prefer to use an RBSDE representation of the value function instead of directly using the representation directly from the classical optimal stopping theory because we want to use the stability result, which we will state in Corollary~\ref{co1}, associated with the former.
\begin{proof}[Proof of \prref{thm:Main}]
For any $\tau\in\mathcal{T}_{\eps,T}$, 
$$\mathbb{E}\xi_\tau=\mathbb{E}[\mathbb{E}[\xi_\tau|\mathcal{F}_{\eps}]]\leq\mathbb{E}\Big[\esssup_{\sigma\in\mathcal{T}_{\eps,T}}\mathbb{E}[\xi_\sigma|\mathcal{F}_{\eps}]\Big].$$
Therefore,
\begin{equation}\label{4}w=\sup_{\tau\in\mathcal{T}_{\eps,T}}\mathbb{E}\xi_\tau\leq \mathbb{E}\Big[\esssup_{\tau\in\mathcal{T}_{\eps,T}}\mathbb{E}[\xi_\tau|\mathcal{F}_{\eps}]\Big].\end{equation}
By Theorem 5.2 in \cite{El1} there exists a unique solution $(Y,Z,K)$ to the RBSDE in \eqref{eq:RBSDE}. Then by Proposition 2.3 (and its proof) in \cite{El1} we have
$$\sup_{\tau\in\mathcal{T}_{\eps,T}}\mathbb{E}\xi_\tau\geq\mathbb{E}\xi_{\hat\tau}=\mathbb{E}Y_{\hat\tau}=\mathbb{E}Y_{\eps}=\mathbb{E}\Big[\esssup_{\tau\in\mathcal{T}_{\eps,T}}\mathbb{E}[\xi_\tau|\mathcal{F}_{\eps}]\Big].$$
Along with \eqref{4} the last inequality completes the proof. 
\end{proof}

Now let us get back to Shiryaev's problem \eqref{eq:value}. As a corollary of \prref{thm:Main}, we have the following result for $v^{(\eps)},\ \eps \in [0,T]$. 
\begin{corollary}\label{co1} 
The value defined in \eqref{eq:value} can be calculated using an RBSDE. More precisely, $v^{\eps}=Y_0$ almost surely,
where $(Y_t)_{0\leq t\leq T}$ satisfies the RBSDE \eqref{eq:RBSDE} with $\xi$ defined as $\xi_t=B_{(t-\eps)^+},\ 0\leq t\leq T$. Moreover, there exists an optimal stopping time $\tilde\tau$ described by
\begin{equation}\label{stop}\tilde\tau=\inf\{t\geq 0: Y_t=B_{(t-\eps)^+}\}\geq\eps 1_{\{\eps<T\}},\quad\text{a.s.}.\end{equation}
Furthermore, the function $\eps \to v^{(\eps)}$, $\eps \in [0,T]$, is a continuous function.
\end{corollary}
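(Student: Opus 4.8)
The plan is to prove Corollary~\ref{co1} in three stages: (i) recognize that \eqref{eq:value} is the special case of \eqref{eq:general} with $n=1$, $\phi^1=B$, $\eps^1=\eps$, and $\eps$ (the lower bound for stopping times) taken to be $0$, so that $\xi_t=B_{(t-\eps)^+}$; (ii) verify the integrability hypothesis of Theorem~\ref{thm:Main}; and (iii) establish the two additional assertions — the lower bound $\hat\tau\geq\eps\wedge(T-\eps)$ and the continuity of $\eps\mapsto v^{(\eps)}$ — which are not part of Theorem~\ref{thm:Main} itself.

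For stage (ii), since $\xi_t=B_{(t-\eps)^+}$ we have $\sup_{0\le t\le T}(\xi_t^+)^2\le\sup_{0\le s\le T}B_s^2$, and $\mathbb{E}[\sup_{0\le s\le T}B_s^2]<\infty$ by Doob's $L^2$ maximal inequality (or reflection). Hence Theorem~\ref{thm:Main} applies with $\eps=0$, giving $v^{(\eps)}=\mathbb{E}Y_0=Y_0$ almost surely (note $Y_0$ is $\mathcal{F}_0$-measurable, hence deterministic up to null sets), and the optimal stopping time $\hat\tau=\inf\{t\in[0,T]:Y_t=\xi_t\}=\inf\{t\ge0:Y_t=B_{(t-\eps)^+}\}$. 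For the bound $\hat\tau\geq\eps\wedge(T-\eps)$: on $[0,\eps\wedge(T-\eps))$ one has $\xi_t=B_{(t-\eps)^+}=B_0=0$ (when $t<\eps$) and the Snell envelope $Y_t=\esssup_{\tau\in\mathcal{T}_{t,T}}\mathbb{E}[\xi_\tau\mid\mathcal{F}_t]$ dominates, in particular, $\mathbb{E}[\max_{0\le s\le\eps\wedge(T-\eps)}B_s\mid\mathcal{F}_t]$ which is strictly positive with positive probability; more carefully, for $t<\eps$, $Y_t\geq\mathbb{E}[B_{(\sigma-\eps)^+}\mid\mathcal{F}_t]$ for the stopping time $\sigma$ that waits until time $\eps$ and then picks the running maximum of $B$ over $[0,\eps\wedge(T-\eps)]$ shifted appropriately, so $Y_t>0=\xi_t$ a.s., preventing $\hat\tau$ from occurring before $\eps\wedge(T-\eps)$. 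This is essentially the same computation as in the Remark preceding the corollary, localized.

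For the continuity of $\eps\mapsto v^{(\eps)}$, I would argue directly on the representation $v^{(\eps)}=\sup_{\tau\in\mathcal{T}_{0,T}}\mathbb{E}B_{(\tau-\eps)^+}$ rather than through the RBSDE. Fix $\eps,\eps'\in[0,T]$ and any $\tau\in\mathcal{T}_{0,T}$; then $|B_{(\tau-\eps)^+}-B_{(\tau-\eps')^+}|\le w_B(|\eps-\eps'|)$, where $w_B(\delta)=\sup_{|s-t|\le\delta,\,s,t\in[0,T]}|B_s-B_t|$ is the modulus of continuity of Brownian motion on $[0,T]$. Taking expectations and then the supremum over $\tau$ gives $|v^{(\eps)}-v^{(\eps')}|\le\mathbb{E}[w_B(|\eps-\eps'|)]$. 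Since $B$ has a.s. continuous paths, $w_B(\delta)\to0$ a.s. as $\delta\to0$, and $w_B(\delta)\le2\sup_{0\le s\le T}|B_s|\in L^1$, so by dominated convergence $\mathbb{E}[w_B(|\eps-\eps'|)]\to0$ as $|\eps-\eps'|\to0$. This yields (uniform) continuity of $\eps\mapsto v^{(\eps)}$ on $[0,T]$.

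The main obstacle is the lower bound $\hat\tau\geq\eps\wedge(T-\eps)$: one must show the Snell envelope $Y_t$ stays strictly above $\xi_t$ on the initial interval, which requires exhibiting an explicit competing stopping strategy (wait, then harvest a future maximum) and checking its conditional expectation beats the current payoff almost surely, not just in expectation — this needs a little care with the conditioning and with the two cases $t<\eps$ versus $\eps\le t<\eps\wedge(T-\eps)$ (the latter being vacuous when $\eps\le T/2$ and requiring a separate short argument when $\eps>T/2$, where in fact $T-\eps<\eps$ and one uses that there is still time $T-\eps>0$ of ``future'' left to improve on $B_{t-\eps}$). The continuity and the reduction to Theorem~\ref{thm:Main} are routine.
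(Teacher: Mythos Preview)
Your reduction to Theorem~\ref{thm:Main} and the integrability check are fine and match the paper. The two substantive points are the lower bound on $\hat\tau$ and the continuity of $\eps\mapsto v^{(\eps)}$, and here your routes differ from the paper's.

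For continuity, the paper simply invokes the stability of RBSDEs (Proposition~3.6 in \cite{El1}). Your modulus-of-continuity argument is correct and more elementary: since $x\mapsto x^+$ is $1$-Lipschitz, $|(\tau-\eps)^+-(\tau-\eps')^+|\le|\eps-\eps'|$, so $|v^{(\eps)}-v^{(\eps')}|\le\mathbb{E}[w_B(|\eps-\eps'|)]\to0$ by dominated convergence. This avoids any reference to RBSDE theory and in fact gives a uniform modulus.

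For the lower bound $\hat\tau\ge\eps\wedge(T-\eps)$, the paper first observes that $v^{(\eps)}=\sup_{\tau\in\mathcal{T}_{\delta,T}}\mathbb{E}\,\xi_\tau$ for every $\delta\in[0,\eps]$, so $\mathbb{E}Y_\delta$ is constant on $[0,\eps]$ and $Y$ is a martingale there; then it uses the supermartingale inequality $Y_t\ge\mathbb{E}[Y_\eps\mid\mathcal{F}_t]>0=\xi_t$ for $t<\eps\wedge(T-\eps)$. Your approach of exhibiting an explicit competing strategy (wait until time $\eps$, then cash in the running maximum of $B$ over $[0,\eps\wedge(T-\eps)]$, which is an $\mathcal{F}_\eps$-measurable time in $[\eps,T]$) is the same idea unpacked one level: that strategy is exactly what makes $Y_\eps>0$ a.s. Both arguments ultimately rest on $\max_{0\le u\le\eps\wedge(T-\eps)}B_u>0$ a.s.

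One small slip: your ``second case'' $\eps\le t<\eps\wedge(T-\eps)$ is \emph{always} empty, not just when $\eps\le T/2$, since $\eps\wedge(T-\eps)\le\eps$ regardless. So there is only the case $t<\eps$ to handle, and your argument for it already suffices.
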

\begin{proof}
By \prref{thm:Main} $v^{(\eps)}=Y_0$ a.s., and $\tilde\tau$ defined in \eqref{stop} is optimal. Besides, the continuity of $\eps \to v^{(\eps)}$, $\eps \in [0,T]$ is a direct consequence of the stability of RBSDEs indicated by Proposition 3.6 in  \cite{El1}. Observe that for $\eps\in(0,T)$ and $t\in[0,\eps]$, $Y_t\geq\mathbb{E}[Y_\eps|\mathcal{F}_t]>0=B_{(t-\eps)^+}$ a.s.. Hence we have that $\tilde\tau\geq\eps 1_{\{\eps<T\}}$ a.s..
\end{proof}
\begin{remark}
In the above result, since for any $\delta\in[0,\eps]$
$$v^{(\eps)}=\sup_{\tau\in\mathcal{T}_{0,T}}\mathbb{E}B_{(\tau-\eps)^+}=\sup_{\tau\in\mathcal{T}_{\delta,T}}\mathbb{E}B_{(\tau-\eps)^+},$$
we can conclude from \prref{thm:Main} that $v^{(\eps)}=\mathbb{E}Y_\delta$, which implies that $(Y_t)_{t \in [0,\eps]}$ is a martingale. 
\end{remark}

Next, we will make some observations about the magnitude of the function $\eps \to v^{(\eps)}$:
\begin{remark}
Observe that for $\eps\in(0,T)$, insider's value defined in \eqref{eq:value} is strictly greater than 0 (and hence does strictly better than a stopper which does not posses the insider information):
$$v^{(\eps)}\geq\mathbb{E}\Big[\max_{0\leq t\leq \eps\wedge(T-\eps)} B_t\Big]=\sqrt{{2 \over \pi}\left(\eps\wedge(T-\eps)\right)}>v^{(0)}=0,$$
which shows that there is an incentive for waiting. We also have an upper bound
\[
v^{(\eps)} \leq \mathbb{E}\Big[\max_{0 \leq t \leq T} B_t\Big] = \sqrt{2 T \over \pi}.
\]
In fact when $\eps \in [T/2,T]$, $v^{(\eps)}$ can be explicitly determined
as
\[
v^{(\eps)} = \mathbb{E}\left[\max_{0 \leq t \leq T-\eps}B_{t} \right]=\sqrt{2 \left(T-\eps\right) \over \pi }, \quad \eps \in [T/2,T].
\] 
and we have a strict lower bound for $\eps \in [0, T/2)$
\[
v^{(\eps)} >\mathbb{E}\left[\max_{0 \leq t \leq \eps}B_{t} \right]=\sqrt{2 \eps \over \pi }, \quad \eps \in [0,T/2).
\]
\end{remark}

\section{Asymptotic behavior of $v^{(\eps)}$ as $\eps\searrow 0$}
The following theorem states that the order of $v^{(\eps)}$ defined in \eqref{eq:value} is $\sqrt{2\eps\ln(1/\eps)}$ as $\eps\searrow 0$, which is the same as Levy's modulus for Brownian motion.  Notice that
$$v^{(\eps)}=\sup_{\tau\in\mathcal{T}_{\eps,T}}\mathbb{E}[B_{\tau-\eps}-B_\tau].$$
\begin{theorem}\label{t3}
\begin{equation}\label{e3}
\lim_{\eps\searrow 0}\frac{v^{(\eps)}}{\sqrt{2\eps\ln(1/\eps)}}=1.
\end{equation}
\end{theorem}
In order to prepare the proof of the theorem, we will need two lemmas.
\begin{lemma}
$$\liminf_{\eps\searrow 0}\frac{v^{(\eps)}}{\sqrt{2\eps\ln(1/\eps)}}\geq 1.$$
\end{lemma}
\begin{proof}
Let $d\in(0,1)$ be a constant, and define $\tau^*\in\T_{\eps,T}$
$$\tau^*:=\inf\{n\eps:\ B_{(n-1)\eps}-B_{n\eps}\geq d\sqrt{2\eps\ln(1/\eps)},\ n=1,\dotso,[T/\eps]-1\}\wedge T.$$
Then
\begin{eqnarray}
\notag\sup_{\tau\in\mathcal{T}_{\eps,T}}\mathbb{E}[B_{\tau-\eps}-B_\tau]&\geq&\mathbb{E}[B_{\tau^*-\eps}-B_{\tau^*}]\\
\notag&=&\mathbb{E}\left[\left(B_{\tau^*-\eps}-B_{\tau^*}\right)1_{\{\tau^*\leq\eps[T/\eps]-\eps\}}\right]+\mathbb{E}\left[\left(B_{\tau^*-\eps}-B_{\tau^*}\right)1_{\{\tau^*>\eps[T/\eps]-\eps\}}\right]\\
\notag&\geq&d\sqrt{2\eps\ln(1/\eps)}\,P(\tau^*\leq\eps[T/\eps]-\eps)+\mathbb{E}\left[\left(B_{T-\eps}-B_T\right)1_{\{\tau^*>\eps[T/\eps]-\eps\}}\right]\\
\notag&=&d\sqrt{2\eps\ln(1/\eps)}\,P(\tau^*\leq\eps[T/\eps]-\eps).
\end{eqnarray}
We have that
\begin{eqnarray}
\notag P(\tau^*\leq\eps[T/\eps]-\eps)&=&1-P\left(B_{(n-1)\eps}-B_{n\eps}<d\sqrt{2\eps\ln(1/\eps)},\ n=1,\dotso,[T/\eps]-1\right)\\
\notag&=&1-\left[P\left(B_\eps-B_0<d\sqrt{2\eps\ln(1/\eps)}\right)\right]^{[T/\eps]-1}\\
\notag&=&1-\left[1-\int_{d\sqrt{2\eps\ln(1/\eps)}}^\infty\frac{1}{\sqrt{2\pi\eps}}e^{-\frac{x^2}{2\eps}}dx\right]^{[T/\eps]-1}\\
\notag&=&1-(1-\alpha)^{\frac{1}{\alpha}([T/\eps]-1)\alpha},\\
\end{eqnarray}
where
$$\alpha:=\int_{d\sqrt{2\eps\ln(1/\eps)}}^\infty\frac{1}{\sqrt{2\pi\eps}}e^{-\frac{x^2}{2\eps}}dx=\frac{1}{2d\sqrt{\pi\ln(1/\eps)}}\eps^{d^2}(1+o(1))\rightarrow 0,$$
by, e.g., \cite[(9.20) on page 112]{KS}. Since $d\in(0,1)$, $([T/\eps]-1)\alpha\rightarrow\infty$, and thus
$$P(\tau^*\leq\eps[T/\eps]-\eps)\rightarrow 1,\quad\eps\searrow 0.$$
Therefore,
$$\liminf_{\eps\searrow 0}\frac{v^{(\eps)}}{\sqrt{2\eps\ln(1/\eps)}}\geq\liminf_{\eps\searrow 0}\left[d\,P(\tau^*\leq\eps[T/\eps]-\eps)\right]=d.$$
Then \eqref{e3} follows by letting $d\nearrow 1$.
\end{proof}
\begin{lemma}\label{l5}
The family
$$\left\{\frac{\sup_{\eps\leq t\leq T}|B_{t-\eps}-B_t|}{\sqrt{2\eps\ln(1/\eps)}}:\ \eps\in\left(0,\frac{T\wedge 1}{2}\right]\right\}$$
 is uniformly integrable.
\end{lemma}
\begin{proof}
Since
\begin{eqnarray}
\notag\frac{\sup_{\eps\leq t\leq T}|B_{t-\eps}-B_t|}{\sqrt{2\eps\ln(1/\eps)}}&\leq&\frac{2\max_{1\leq n\leq [T/\eps]+1}\sup_{(n-1)\eps\leq t,t'\leq n\eps}|B_t-B_{t'}|}{\sqrt{2\eps\ln(1/\eps)}}\\
\notag&\leq&\frac{4\max_{1\leq n\leq [T/\eps]+1}\sup_{(n-1)\eps\leq t\leq n\eps}|B_t-B_{(n-1)\eps}|}{\sqrt{2\eps\ln(1/\eps)}},
\end{eqnarray}
it suffices to show that the family
$$\left\{M_\eps:=\frac{\max_{1\leq n\leq [T/\eps]+1}\sup_{(n-1)\eps\leq t\leq n\eps}|B_t-B_{(n-1)\eps}|}{\sqrt{\eps\ln(1/\eps)}}:\ \eps\in\left(0,\frac{T\wedge 1}{2}\right]\right\}$$
is uniformly integrable. For $a\geq 0$,
$$P(M_\eps\leq a)=\left[P\left(\sup_{0\leq t\leq\eps}|B_t|\leq a\,\sqrt{\eps\ln(1/\eps)}\right)\right]^{[T/\eps]+1}.$$
Hence the density of $M_\eps$, $f_\eps$, satisfies that for $a\geq 0$,

\begin{eqnarray}
\notag f_\eps(a)&\leq&([T/\eps]+1)\left[P\left(\sup_{0\leq t\leq\eps}|B_t|\leq a\,\sqrt{\eps\ln(1/\eps)}\right)\right]^{[T/\eps]}\sqrt{\frac{8}{\pi}}\sqrt{\ln(1/\eps)}e^{-\frac{\ln(1/\eps)}{2}a^2}\\
\notag &\leq&\frac{4T\sqrt{\ln(1/\eps)}}{\eps}e^{-\frac{\ln(1/\eps)}{2}a^2},
\end{eqnarray}
where for the first inequality we use, e.g., \cite[(8.3) on page 96]{KS}, and the fact that the density of $\sup_{0\leq t\leq\eps}|B_t|$ is no greater than twice the density of $\sup_{0\leq t\leq\eps}B_t$. Then we have that for $N>0$,
$$\E\left[M_\eps1_{\{M_\eps>N\}}\right]=\int_N^\infty xf_\eps(x)dx\leq \frac{4T\sqrt{\ln(1/\eps)}}{\eps}\int_N^\infty xe^{-\frac{\ln(1/\eps)}{2}x^2}dx=\frac{4T\eps^{\frac{N^2}{2}-1}}{\sqrt{\ln(1/\eps)}}\leq\frac{T}{2^{\frac{N^2}{2}-3}\sqrt{\ln 2}},$$
i.e.,
$$\lim_{N\rightarrow\infty}\sup_{\eps\in(0,\frac{T\wedge 1}{2}]}\E\left[M_\eps1_{\{M_\eps>N\}}\right]=0.$$
\end{proof}
Now let us turn to the proof of \thref{t3}.
\begin{proof}[Proof of \thref{t3}]
\begin{eqnarray}
\notag\limsup_{\eps\searrow 0}\frac{\sup_{\tau\in\mathcal{T}_{\eps,T}}\mathbb{E}[B_{\tau-\eps}-B_\tau]}{\sqrt{2\eps\ln(1/\eps)}}&\leq&\limsup_{\eps\searrow 0}\E\left[\frac{\sup_{\eps\leq t\leq T}|B_{t-\eps}-B_t|}{\sqrt{2\eps\ln(1/\eps)}}\right]\\
\notag&\leq&\E\left[\limsup_{\eps\searrow 0}\frac{\sup_{\eps\leq t\leq T}|B_{t-\eps}-B_t|}{\sqrt{2\eps\ln(1/\eps)}}\right]\\
\notag&\leq&1,
\end{eqnarray}
where we apply \leref{l5} for the second inequality, and use Levy's modulus for Brownian motion (see, e.g., \cite[Theorem 9.25, page 114]{KS}) for the third inequality. Together with \eqref{e3}, the conclusion follows. 
\end{proof}
Using the above proof, we can actually show the following result, which is L\'{e}vy's modulus continuity result in the $L^1$ sense, as opposed to the almost-surely sense (see, e.g., \cite[Theorem 9.25, page 114]{KS}).
\begin{corollary}\label{c2}
$$\lim_{\eps\searrow 0}\frac{\sup_{\tau\in\mathcal{T}_{\eps,T}}\mathbb{E}[B_{\tau-\eps}-B_\tau]}{\sqrt{2\eps\ln(1/\eps)}}=\lim_{\eps\searrow 0}\E\left[\frac{\sup_{\eps\leq t\leq T}(B_{t-\eps}-B_t)}{\sqrt{2\eps\ln(1/\eps)}}\right]=\lim_{\eps\searrow 0}\E\left[\frac{\sup_{\eps\leq t\leq T}|B_{t-\eps}-B_t|}{\sqrt{2\eps\ln(1/\eps)}}\right]=1.$$
\end{corollary}

\bibliographystyle{siam}
\bibliography{ref}

\end{document}